\long\def\symbolfootnote[#1]#2{\begingroup%
\def\thefootnote{\fnsymbol{footnote}}\footnote[#1]{#2}\endgroup}
\newcommand\CD{\check{D}}
\newcommand\fbul{F^\bullet}
\newcommand\bsm{ \begin{smallmatrix}}
\newcommand\bspm{ \left(\begin{smallmatrix}}
\newcommand\esm{\end{smallmatrix} }
\newcommand\espm{\end{smallmatrix} \right)}
\newcommand\bbm{\left[\begin{matrix}}
\newcommand\ebm{\end{matrix}\right]}
\newcommand\bcs{\begin{cases}}
\newcommand\ecs{\end{cases}}
 \newcommand{\lra}[1]{\left\langle#1\right\rangle}
\newcommand{\lrp}[1]{\left(#1\right)}
 \newcommand\wt[1]{\widetilde{#1}}
 \newcommand\V{\mathbb{V}}
\newcommand\hh{\mathfrak{h}}
\renewcommand\tt{\mathfrak{t}}
\newcommand{\C}{{\mathbb{C}}}
\renewcommand{\P}{\mathbb{P}}
\newcommand{\Q}{{\mathbb{Q}}}
\newcommand{\R}{\mathbb{R}}
\newcommand{\Z}{\mathbb{Z}}
\newcommand{\cF}{{\mathscr{F}}}
\newcommand{\cL}{{\mathscr{L}}}
\newcommand{\cO}{{\mathscr{O}}}
\newcommand{\cX}{{\mathscr{X}}}
\newcommand\bpm{\begin{pmatrix}}
\newcommand\epm{\end{pmatrix}}
\newcommand\mtd{Mumford-Tate domain}
\newcommand\mtg{Mumford-Tate group}
\newcommand\phs{polarized Hodge structure}
\newcommand\mt{Mumford-Tate}
\newcommand\NL{{\mathop{\rm NL}\nolimits}}
\newcommand\MT{\mathop{\rm MT}\nolimits}
\newcommand{\Hg}{{\rm Hg}}
 \newcommand\Aut{\mathop{\rm Aut}\nolimits}
\newcommand\End{\mathop{\rm End}\nolimits}
\newcommand\GL{\mathop{\rm GL}\nolimits}
\newcommand\Sym{\mathop{\rm Sym}\nolimits}
\newcommand\prim{{\rm prim}}
\newcommand{\codim}{\mathop{\rm codim}\nolimits}
 \newcommand{\Hom}{\mathop{\rm Hom}\nolimits}
 \renewcommand{\part}{\partial}
\newcommand{\la}{{\lambda}}
\newcommand\Ga{{\Gamma}}
\newcommand{\La}{{\Lambda}}
\newcommand{\Om}{{\Omega}}
\newcommand{\om}{{\omega}}
\newcommand\vp{\varphi}
\newcommand\sig{\sigma}
\newcommand\bsl{\backslash}
 \renewcommand{\AA}{{\mathfrak{A}}}
\newcommand{\lab}{\label}
\newcommand{\hensp}[1]{\enspace\hbox{#1}\enspace}
\newcommand{\opplus}{\mathop{\oplus}\limits}
\newcommand{\ottimes}{\mathop{\otimes}\limits}
 \newcommand\bmpt[2]{\hbox{\begin{minipage}[t]{#1in}  #2 \end{minipage}}}
\newcounter{demo}[equation]
 \newtheoremstyle{mytheo}
  {3pt}
  {3pt}
  {\itshape}
  {}
  {\scshape}
  {:}
  {.5em}
  {}
\theoremstyle{mytheo}
\newtheorem{Thm}[equation]{Theorem}
\newtheorem{Lem}[equation]{Lemma}
\newtheorem{Prop}[equation]{Proposition}
 \newtheoremstyle{subsect}
 {3pt}
  {3pt}
  {}
  {}
  {\it}
  {\upshape{:}}
  {.5em}
  {}
\theoremstyle{subsect}
\newtheoremstyle{note}
  {3pt}
  {3pt}
  {}
  {}
  {\bfseries}
  {:}
  {.5em}
  {}
\theoremstyle{note}
\theoremstyle{remark}
\newcommand\simto{\xrightarrow{\sim}}
\newcommand\ol[1]{\overline{#1}}
\newcommand\olb{{\overline{B}}}
    \newcommand\HL{\mathop{\rm HL}\nolimits}
   \newcommand\pos{\text{\rm pos}}
  \newcommand\bsp[1]{\begin{split} #1 \end{split}}
  \newcommand\beb{\begin{enumerate}[$\bullet$]}
 \newcommand\eeb{\end{enumerate}}
\newcommand\vspth{\vspace*{-2pt}}
   \newcommand\cV{\mathcal{V}}
   \newcommand\bul{\bullet}
  \newcommand\bull{\bullet}
  \theoremstyle{mytheo}
  \newtheorem{mainth}[equation]{Main Theorem}
   \theoremstyle{note}
\newtheorem{Definb}[equation]{Definition}
  \newtheorem*{definb}{Definition}
   \DeclareFontFamily{U}{mathx}{}
\DeclareFontShape{U}{mathx}{m}{n}{<-> mathx10}{}
\DeclareSymbolFont{mathx}{U}{mathx}{m}{n}
\DeclareMathAccent{\widecheck}{0}{mathx}{"71}
  \renewcommand\CD{\widecheck{D}}
   \renewcommand\gg{\mathfrak{g}}
\newcounter{Cequ}
\begin{document}
      \begin{spacing}{1}
      \title{Atypical Hodge Loci}
      \author{Phillip Griffiths}
      \address{Institute for Advanced Study, Einstein Drive, Princeton, NJ 08540 and Department of Mathematics, University of Miami, Coral Gables, FL 33124}
      \email{pg@ias.edu}
     \thanks{This paper is   based on  \cite{BKU} and related works given in the references in that work, and on   discussions with Mark Green, Matt Kerr and Colleen Robles.  In particular I would like to thank Colleen Robles for help with the technical arguments used in the proof of the main result.}

\subjclass[2020]{Primary 14, 22, 42, 53}

\date{}

\begin{abstract}
In the recent works of a number of people there has emerged a beautiful new perspective on the arithmetic properties of Hodge structures. A central result in that development appears in a paper by Baldi, Klingler, and Ullmo. In this expository work we will explain that result and give a proof. The main conceptual step is to formulate Noether-Lefschetz loci in terms of intersections of period images with Mumford-Tate subdomains of period domains. The main technical step is to use the alignment of the Hodge and root space decompositions of the Lie algebras of the associated groups and from it to use the integrability conditions associated to a Pfaffian PDE system.  These integrability conditions ``explain'' the generally present excess intersection property associated to the integral varieties of a pair of Pfaffian exterior differential systems.
\end{abstract}

\maketitle
       
       \tableofcontents
       
 Hodge theory may be said to include three types of applications to algebraic geometry; namely to 
       \beb
       \item the geometric properties of algebraic varieties;
       \item the topology of algebraic varieties;
       \item the arithmetic properties of algebraic varieties.
       \eeb
       This paper  will discuss a topic in the third item.
       
       Recently there has been a flurry of activity concerning the arithmetic properties, especially functional transcendence, of periods of integrals of algebraic differential forms.  A summary of this theory appears in the proceedings of the ICM 2020 paper \cite{K1}.  In this expository paper we will explain and discuss a proof of one of  the main theorems in the work \cite{BKU}.  This beautiful result plays a central role in the recent developments in functional transcendence theory.  In the proof in loc.\ cit. of this result there are two main Hodge theoretic components, one conceptual and the other Lie theoretic.  The conceptual one is to use intersections of period images with Mumford-Tate subdomains instead of the more traditional Noether-Lefschetz loci.  The Lie theoretic one is to use the compatible alignments of the Hodge and root space decompositions of the Lie algebras of the  automorphism groups of the  period domain and Mumford-Tate subdomain to produce the non-transversality of the integrability conditions associated to a pair of Pfaffian exterior differential systems.
       
       This paper grew out of a talk given at the Regulators V Conference held June 3--15, 2024, at Pisa.  A main purpose of both the talk and this paper is to help draw attention to  some of the Hodge theoretic aspects of the developments in arithmetic algebraic geometry that are discussed in \cite{K1}, \cite{BKU} and in a number of subsequent works including \cite{BKT}, \cite{BBKT}, \cite{C}, \cite{HP}, \cite{KO}, \cite{U}.  Hopefully it will also help serve  as an invitation for Hodge theorists and others to  this  very rich   and currently very active development.
       \section{Introduction}\label{sec1}
       We will be concerned with the question
       \begin{quote}
       \emph{What can one say about Hodge loci?}
       \end{quote}
       Specifically, we consider the situation where
       \beb
       \item $B$ is a smooth, connected quasi-projective variety;
       \item $\V\to B$ is the local system underlying a variation of \phs\ of weight $n$;
       \item $\HL(B)$ is the set of points $b\in B$ where there are more Hodge classes in the tensor algebra $\V^\otimes_b:= \opplus^{k,\ell}( \V_b^{\otimes^k}\otimes V_b^{\ast\otimes \ell} )$ than there are at a very general point of $B$.   
           \eeb
       A central question is 
       \begin{quote} \emph{What can we say about $\HL(B)$?}
       \end{quote}
            In \cite{CDK} it is proved that $\HL(B)$ is a countable union of algebraic varieties.
Very informally stated, denoting by $\HL(B)_{\pos}$ the positive dimensional components of $\HL(B)$ the result of \cite{BKU} is
         \begin{equation}\lab{1.1} \bmpt{4.5}{\emph{For $n\geqq 3$ and aside from  exceptional cases, every irreducible component of $\HL(B)_\pos$ has less than the expected codimension.\footnotemark}}                \end{equation}       
              \footnotetext{For subvarieties $A,B$ of a variety $C$ the expected codimension in $C$ of $A\cap B$ is $\codim_C A+\codim_C B$; i.e., that which would be obtained if the intersection were transverse.}
              \noindent
               That means that  if there are Hodge classes that vary in a positive dimensional family, then there are strictly more of these than suggested by a standard dimension count. As explained below the reason for this will be a consequence of  the integrability conditions arising from the differential constraint of the variation of Hodge structure.  In calculating these a key ingredient is 
                 properties    relating the Hodge and root space decompositions of  a semi-simple Hodge Lie algebra  \cite{R}.
      
      Although we shall not discuss it, in \cite{BKU} it is proved that if \eqref{1.2} is satisfied, then $\HL(B)_\pos$ is a \emph{finite} union of irreducible algebraic varieties.

            The \emph{geometric case} is when the variation of Hodge structure arises from the cohomology along the fibres of a smooth family $\cX\to B$ of projective varieties.  In this case, assuming the Hodge conjecture the result gives that if there are algebraic cycles $Z_b \subset X_b$ whose cohomology classes are not found on a general $X_b$ and that non-trivially vary with~$b$, then there are strictly more such cycles than one naively expects to find.

               An interesting point is that whereas in general integrability conditions \emph{decrease} the expected dimension of the space of solutions to a single system of differential equations, for the pair of EDS's in the case at hand this dimension is actually \emph{increased}.  The mechanism behind this will be illustrated in Example 2 below.
       
           With the notation to be explained below a sufficient condition for \eqref{1.1} to hold is
           \begin{equation}\lab{1.2}
           \gg^{-k,k} \ne 0\hensp{for some} k\geqq 3.\end{equation}
           In \cite{BKU} it is shown that these conditions hold for smooth hypersurfaces $X\subset \P^{n+1}$ where $n\geqq 3$, $d\geqq 5$ and $(d)\ne (4.5)$.  We will give the argument for this in Section~\ref{sec5}.
           
            In Section \ref{sec5} we will also recall the definition of the coupling length $\zeta(\AA)$  where $\AA\subset \gg^{-1,1}$ is the image of the differential of the period mapping at a general point (\cite{VZ}).  There  we will show that
           \begin{equation}\lab{1.3}
           \zeta(\AA)\geqq 3\implies \text{\eqref{1.2}}.\end{equation}
    Examples where the coupling length $\zeta(\AA)\geqq 3$ include   the moduli   of Calabi-Yaus of dimension $\geqq 3$ whose Yukaya coupling is non-zero.  
           
           The case 
           \begin{equation}\lab{new1.3}
           \gg^{-k,k}=0 \hensp{ for }k\geqq 3\end{equation} includes the \emph{classical case} where the period domain is Hermitian symmetric. 
              The case $\gg^{-2,2}\ne 0$ but $\gg^{-k,k}=0$ for $k\geqq 3$ includes the case of weight $n=2$ Hodge structures; here the differential constraint is non-trivial but the corresponding integrability conditions do not enter into the dimension count.
          In this situation classical results in \cite{G1}  \cite{V1}, and \cite{V2} give that in many cases Noether-Lefschetz loci are analytically dense in moduli.      The classical case and the case of weight $n=2$ are   discussed in detail in \cite{BKU}.
    
           The  conditions to have \eqref{new1.3}
        may be expressed as a non-linear PDE system for the variation of Hodge structure.  At least in principle the E. Cartan theory of exterior differential systems gives an algorithmic procedure for determining the space of formal solutions to this system (\cite{BCGGG}).  It would seem of interest to carry this out in the present situation.

           \section{Two examples}\label{sec2}\setcounter{equation}{0}
           We consider the geometric case where $\cX\to B$ is a family of surfaces with $X_{b_0}=X$ and $\la\in \Hg^1(X)_{\prim}$ is a primitive Hodge class.
           
           \begin{equation} \lab{2.1}
           \bmpt{4.25}{{\bf Q:} \emph{How many conditions is it for $\la$ to vary with $X$ as a Hodge class?}}\end{equation}
           We restrict to a neighborhood $U$ of $b_0\in B$ so that $\la\in H^2(X_b)_\prim$ is well defined.  Setting 
    \[
    \NL_\la = \{b\in U:\la\in \Hg^1(X_b)\}\]
    we are asking \emph{What is the codimension of $\NL_\la$ in $U$?}  For this there is the classical estimate
    \begin{equation}\lab{2.2}
    \codim_U\NL_\la \leqq h^{2,0}(X).\end{equation}
    This bound is achieved; e.g., for smooth surfaces $X\!\subset\! \P^3 $ with $d=\deg X\!\geqq\! 4$ (cf.\ \cite{G1} and \cite{G2}).\footnote{A general treatment of Noether-Lefschetz loci given in \cite{G3} and in \cite{V1}, \cite{V2}.}  In this case one also has
    \begin{equation}\lab{2.3}  
    d-3\leqq \codim_B \NL_\la\end{equation}
 with equality holding only for surfaces containing a line.  It is also proved in loc.\ cit.\ that $\HL(B)$ is analytically dense where $B$ is the moduli space.
     
    For the second example taken from \cite{GG} and \cite{GGK} we let $\cX\to B$ be a family of 4-folds with $\la\in \Hg^2(X)_\prim$ a primitive Hodge class for $X=X_{b_0}$ and ask the same question.  The analogue of \eqref{2.2} is 
    \begin{equation}\lab{2.4}
    \codim_B\NL_\la \leqq h^{4,0}(X) +h^{3,1}(X).\end{equation}
    However due to transversality of the period mapping we have that for the first order variation of $X$ in a direction $\theta\in T:= T_{b_0}B$  the product $\theta\cdot\la\in H^{1,3}(X)$ 
  and therefore for any $\om\in H^{4,0}(X)$
      \begin{equation}\lab{2.5}
    \lra{\om,\theta\cdot \la}=0.\end{equation}
    Thus a refinement of \eqref{2.4} is
    \begin{equation}\lab{2.6}
    \codim_B\NL_\la \leqq h^{3,1}(X).\end{equation}
    We refer to the right-hand side of \eqref{2.6} as the \emph{naive expected codimension} of $\NL_\la$ in $B$.
    
    At this juncture a new consideration enters.  Setting
    \begin{equation}\lab{2.7}\bsp{
    T_\la&:= \{\theta\in T: \theta\cdot \la=0\hensp{in}H^{1,3}(X)\},\\
    \sig(\la)&=\text{Image}\{T_\la\otimes H^{4,0}(X)\to H^{3,1}(X)\}}\end{equation}
    we have
    \begin{equation}\lab{2.8}
    \codim_B\NL_\la\leqq h^{3,1}(X)-\dim \sig(\la).\end{equation}
    
    \begin{proof}
    For $\theta\in T_\la$ and any $\theta'\in T$, $\om\in H^{4,0}(X)$
    \begin{align*}
    \lra{\theta\cdot \om,\theta'\la}&= -\lra{\om,\theta\theta'\la}\\
    &= -\lra{\om,\theta'\theta\la}\\
    &=0\end{align*}
    where the second step follows from the integrability condition $\theta\theta'=\theta'\theta$ arising from transversality.
   \end{proof}
   
   Assuming that the map defining $\sig(\la)$ in \eqref{2.7} is non-zero we see that due to integrability the actual codimension of $\NL_\la$ is strictly less than the naive expected codimension.
   
   One may show that the estimate \eqref{2.8} is sharp; e.g., by taking $X\subset \P^5$ a hypersurface of degree 6 containing a 2-plane $\La$ and for $\la\in\Hg^2(X)_\prim$ the primitive part of the class of $\La$ (cf.\ \cite{GG}).
    
    Finally there is a discussion of Noether-Lefschetz loci in \cite[\S III.C]{GGK}.  A general version of the second example above is given there in III.C.5.
        \section{The main result}\label{sec3}
    (i) \emph{Hodge structures and \mtg s}.\footnote{A general reference for Hodge theory is \cite{CM-SP}.  For \mtg s we will generally follow the presentation and notations given in \cite{GGK}.}  A \emph{\phs}\ of weight $n$ is given by the data $(V,Q,\fbul)$ where
    
    \beb
    \item $V$ is a $\Q$-vector space and $Q\!:\!V\otimes V\to \Q$ is a non-degenerate bilinear form with $Q(u,v)=(-1)^n Q(v,u)$;
    \item $F^n\subset F^{n-1}\subset\cdots\subset F^0\subset V_\C$ is a \emph{Hodge filtration} satisfying
  \vspth$$
    F^p\oplus \ol{F}^{n-p+1}\simto V_\C, \quad 0\leqq p\leqq n;  \vspth$$ and
    \item the two Hodge-Riemann bilinear relations are satisfied (we do not need their explicit form).
    \eeb
    Setting 
    \[
    V^{p,q}=F^p\cap \ol F^q\]
    the second condition above is equivalent to the \emph{Hodge decomposition} \setcounter{equation}{0}
    \begin{equation}\lab{3.1}
    V_\C=\oplus V^{p,q},\quad \ol{V^{p,q}}= {V^{q,p}}.\end{equation}
    Using $ Q$ we have an identification
    \begin{equation} \lab{3.2}
    V\cong  V^\ast.\end{equation}
       We will generally omit reference to $Q$, its presence being understood.
       
       A Hodge structure of weight $n$ may equally be given by a homomorphism
       \[
       \vp:S^1 \to \GL(V_\R)\]
       such that its action on $V_\C$ is determined by the \emph{Weil operator}
        \vspth \[
       \vp(i)v= i^{p-q}v,\quad v\in V^{p,q}.  \vspth\]
    
    When the weight $n=2m$ the \emph{Hodge classes} are
    \vspth  \[
    \Hg^m(V)=V^{m,m}\cap V,  \vspth\]
    the rational vectors of type $(m,m)$.  These are the rational classes that are fixed by the Weil operator.
    
  We denote by 
  \vspth  \[
  V^{\otimes} := \opplus^{k,\ell}\lrp{ (\ottimes^k V) \otimes (\ottimes^\ell V^\ast)}  \vspth\]
  the tensor algebra of $V$.    The algebra of Hodge tensors involves tensor products of both $V$ and its dual $V^\ast$.  Using $Q$ we may consider only powers of~$V$.  Although somewhat artificial this  simplifies the notations and involves no loss of generality.  We then denote by
   \vspth\[
  \Hg^\bul(V^\otimes):=\opplus^k \Hg^{k\,n/2}(\otimes^{k}V)   \vspth\]
 the sub-algebra of Hodge tensors.

  \begin{definb}
  The \emph{Mumford-Tate group} $\MT(V)$ is the smallest $\Q$-algebraic sub-group of $\Aut(V,Q)$
  such that $\vp(S^1)\subset \MT(V)(\R)$.
    Equivalently (cf.\ (I.B.1)  in \cite{GGK})
  \begin{equation}\lab{new3.4}
  \MT(V)\subset \Aut(V , Q)\hensp{is the subgroup that fixes}\Hg^\bullet(V^\otimes).\end{equation}   \end{definb}

The \mtg\ is a reductive $\Q$-algebraic group   denoted here  by $G$.  Its Lie algebra
  \[
  \gg\subset\End(V,Q)\]
  is a 
\emph{Hodge Lie algebra}; i.e., it  has a Hodge structure of weight zero with Hodge decomposition
  \[
  \gg_\C = \oplus \gg^{-k,k}\]
  where
  \[
  \gg^{-k} :=\gg^{-k,k}=\{A\in \gg_\C\hensp{such that} A:V^{p,q}\to V^{p-k,q+k}\}. \]

  We note that after possibly passing a finite cover the real Lie group
  \[
  G(\R)=G_1\times\cdots\times G_k\times T\]
  is a product of simple Lie groups with a compact torus.  However we will not have a corresponding product decomposition of $G$.
    
    Let $\gg_\R=\oplus \gg_{i,\R}$ be the decomposition of $\gg_\R$ into simple factors.  Following \cite{BKU} we have the
    \begin{Definb} The \emph{level} $\ell(\gg)$ is the smallest $k$ such that  all $\gg^k_i\ne 0$.
    \end{Definb}
    
    (ii) \emph{Variation of Hodge structure.} This is given by the data $(\V,\cF^\bul;B)$ where
    \beb
    \item $\V\to B$ is a local system over a smooth, connected quasi-projective variety $B$;
    \item $\cF^\bul$ is a filtration of $\cV=\V_\C\otimes_\C\cO_B$ by holomorphic sub-bundles that induce on each 
    $\V_b$ a Hodge structure, and where for $\nabla$ the Gauss-Manin connection corresponding to  $\V\subset \cV$ the transversality condition
    \begin{equation}
    \lab{3.4}
    \nabla \cF^p \subset \cF^{p-1}\otimes \Om^1_B\end{equation}
    is satisfied.
    \eeb
    
    It is understood that there is a horizontal section $Q$ of $(\V_\Q\otimes \V_\Q)^\ast$ that polarizes the Hodge structures.
    
    At each point of $B$ there is an algebra of Hodge tensors and \mtg. Outside of a countable union of proper subvarieties of $B$ these algebras are locally constant.  We denote by $V=\V_{b_0}$ the fibre of $\V$ at such a very general point and by $G\subset \Aut(V,Q)$ the corresponding \mtg\ of the variation of Hodge structure.  
    
    The action of $\pi_1(B,b_0)$ on $V$ induces the monodromy group $\Ga\subset \Aut(V,Q)$.  It is known that $\Ga\subset G$ and in \cite{GGK} there is the general structure theorem III.A.1  (cf.\ Corollary III.A.2) describing their relation.  In this works in order to isolate the central points we will make the assumption
    \begin{equation}\lab{3.5}
    \bmpt{4.25}{$G$ \emph{is a simple $\Q$-algebraic group equal to the $\Q$-Zariski closure $\ol{\Ga^\Q}$ of the monodromy group.}}\end{equation}    
    
    As explained in loc.\ cit.\ the general case may be reduced to the case in which up to a finite cover $G$ and $\Ga$ are products of factors $G_i$ and $\Ga_i$  where either $\ol{\Ga_i^\Q}=G_i$ or $\Ga_i$ is trivial.  The induced variations of Hodge structure corresponding to the later are constant and will not contribute to the end result.
    
    (iii) \emph{Period mappings.} Given $(V,Q)$ the set of \phs s with given Hodge numbers $h^{p,q}=\dim V^{p,q}$ is a homogeneous complex manifold called a period domain.  The set of those \phs s whose \mtg\ is contained in $G$ gives a \emph{Mumford-Tate domain}
    \[
    D=G(\R) /G_0\]
    where $G(\R)$ is the real Lie group associated to $G$ and $G_0$ is a compact subgroup.  Following \cite{BKU} for $\Ga$ an arithmetic subgroup of $G$ the quotient $\Ga\bsl D$ is called a \emph{Hodge variety}.
    
    Associated to a variation of Hodge structure with \mtg\ $G$ there is a period mapping
    \begin{equation}\lab{3.6}
    \Phi:B\to\Ga\bsl D.
    \end{equation}
    It may be assumed that $\Phi$ is proper with image $P\subset\Ga\bsl D$ a quasi-projective variety \cite{BBT}.\footnote{In the final Reprise section of this paper we will comment on the proof of this result; this argument brings new ideas into the study of period mappings.}  In order to isolate the essential points in  the following discussion    we will   make the assumption
    \begin{equation}\lab{3.7}
    \text{\emph{$P$ is smooth, and we identify $\Phi(B)=P\subset\Ga\bsl D$.}}\end{equation}
    All of the results discussed below hold without this assumption, and the proofs may be given using standard technical modifications in the general case where \eqref{3.7} may not hold. The one situation that requires extra care is when there is an irreducible subvariety $Z\subset B$ such that $\Phi(Z)$ is contained in the singular locus of $\Phi(B)$ and where the generic Mumford-Tate along $Z$ is smaller than $G$. 
    
    For $b\in B$ and using a lift to $D$ of $\Phi(b)$ the tangent space to $D$ at the point is identified
    with $\gg_\C/F^0\gg_\C$.  Using \eqref{3.4} the differential of the period mapping is
    \begin{equation}\lab{3.8}
    \Phi_\ast :T_b B\to F^{-1}\gg_\C/F^0\gg_\C.\end{equation}
    We may identify the right-hand side of \eqref{3.8} with $\gg^{-1,1}$.  For later use we have (cf.\ \cite{CM-SP})
    \begin{equation}\lab{3.9}
    \Phi_\ast (T_bB):= \mathfrak{A}\text{ is an abelian sub-algebra of }\gg^{-1,1}.\end{equation}
    The ``abelian" is a consequence of the integrability conditions imposed by the transversality property \eqref{3.8}.
    
    It is interesting to note that given a Hodge Lie algebra and abelian   sub-algebra $\AA\subset \gg^{-1,1}$ there is a local VHS with $\AA$ as its tangent space.  Thus there are no ``higher" integrability conditions beyond having an abelian sub-algebra of $\gg^{-1,1}$.
    \smallbreak
    
    (iv) \emph{Hodge loci.} We are interested in proper, irreducible subvarieties $Z\subset B$ along which  the corresponding Hodge structures have extra Hodge tensors.  Equivalently the Lie algebra of the \mtg\ $H$ at a general point of $Z$ should be   strictly contained in  that of  $G$:
    \[
    \hh \subsetneqq  \gg.\]
    A basic observation is
    \begin{equation}\lab{3.10} 
    \bmpt{4.25}{\emph{If $D_H\subset D$ is the $H(\R)$-orbit of a very general point of $Z$, then for $\Ga_H=\Ga\cap H$ we have}
    \[
    \Phi(Z) \subset \Ga_H \bsl D_H.\]}\end{equation}
    
    \begin{definb}[\cite{BKU}]
    If $H\subset G$ is a Mumford-Tate subgroup, then
    \begin{equation}\lab{3.11}
    \Phi^{-1}(\Phi(B)\cap (\Ga_H\bsl D_H))^0\end{equation}
    is a \emph{special subvariety} of $B$.
    \end{definb}
    
    The exponent $^0$ means to take an irreducible component of the intersection.  Recalling the notation $P=\Phi(B)$ we   set
    \begin{equation}\lab{new3.13}
    P_H = (P\cap (\Ga_H\bsl D_H))^0.\end{equation}
    
  The subvariety $Z$ may not be maximal with Mumford-Tate group~$H$.  In order to consider irreducible subvarieties that have extra Hodge classes rather than a particular $Z$ one should use the  intersection~\eqref{3.11}.  
  
  The standard codimension of an intersection estimate gives
    \begin{equation}
    \lab{3.12}
    \codim_{\Ga\bsl D}(\Phi(B)\cap (\Ga_H\bsl D_H))^0 \leqq \codim_{\Ga\bsl D}(\Ga_H\bsl D_H)+
    \codim_{\Ga\bsl D}\Phi(B),\end{equation}
or  in the notation \eqref{new3.13}
    \begin{equation}\lab{3.13}
    \codim_{\Ga\bsl D}P_H  \leqq \codim_{\Ga\bsl D}(\Ga_H\bsl D_H)+\codim_{\Ga\bsl D} P.\end{equation}
    
    \begin{definb} The subvariety $\Phi^{-1}(\Ga_H\bsl D_H)\subset B$ is \emph{atypical} if we have strict inequality in \eqref{3.12}.\end{definb}
    In other words, atypical means there are strictly more Hodge tensors than suggested by the standard    dimension count.
    
    \begin{mainth} \lab{mainth}
    If the variation of Hodge structure has level at least three, then every positive dimensional special subvariety is atypical.
    \end{mainth}
    As will be explained below, the proof will be to show that the condition for $\gg$ to have level at least three   implies  that the integrability conditions arising from transversality are non-trivial for \emph{every} positive dimensional, special  subvariety of $B$.

     \section{Proof of the main result}\label{sec4}
    
    (i) \emph{Sketch of the argument for   Theorem \ref{mainth}.}  The strategy is to assume equality in \eqref{3.12}, or equivalently in \eqref{3.13},
    and from this infer that
    \setcounter{equation}{0}
    \begin{equation}\lab{4.1}
    \hh^- = \gg^-.\end{equation}
    This implies that
    \begin{equation}\lab{4.2}
    D_H=D,\end{equation}
    i.e., the special subvariety is all of $B$.
    
    We let $\wt P\subset D$ be the inverse image in $D$ of $P\subset \Ga\bsl D$ and $\wt P_H= (\wt P\cap D_H)^0$.  Here recall that  the $(\enspace)^0$ means taking an irreducible component of $P\cap D_H$.  From \eqref{3.13} we have
    \begin{equation}\lab{4.3}
    \codim_D(\wt P_H)=\codim_D \wt P+\codim_D D_H.\end{equation}
    Using the assumption \eqref{3.7} we may work  infinitesimally in the tangent space $T_0D\! \cong\! \gg^{-}$ with $T_0 D_H\!\cong\! \hh^{-}$. This gives
    \begin{align*}
    \codim_D \wt P_H &= \dim \gg^- - \dim T_0\wt P_H,\\
    \codim_D\wt P &= \dim \gg^- - \dim T_0\wt P,\\
    \codim_D D_H&= \dim \gg^- - \dim \hh^-.\end{align*}
    Then \eqref{4.3} yields
    \[
   \dim T_0\wt P-\dim T_0\wt P\cap \hh^- = \dim \gg^- -  \dim \hh^-  .\]
    Rewrite this as
    \begin{align*}
    \sum_{p\geqq 2} \dim\hh^{-p} + \codim_{\hh^{-1}}  T_0\wt P_H  = 
    \sum_{p\geqq 2} \dim \gg^{-p} +\codim_{\gg^{-1}} T_0 \wt P.\end{align*}
    Since $\dim \hh^{-p}\leqq \dim \gg^{-p}$ and $\codim_{\hh^{-1}} T_0\wt P_H \leqq \codim_{\gg^{-1}} T_0\wt P$ this forces
    \begin{equation}\lab{4.4}      \hh^{-p}= \gg^{-p}, \qquad p\geqq 2. \end{equation}
    From this we want to conclude \eqref{4.1},   which gives
    \[
    T_0 D_H=T_0 D\]
  and  which then implies \eqref{4.2}.\footnote{We note that   \eqref{4.4} is vacuous in the classical case.}
    
    At this point the idea is that from the condition in \eqref{4.4} the Lie algebra $\cL$   generated by the $\hh^{-k}$, $|k|\geqq 2$ is equal to the Lie algebra generated by the $\gg^{-k}$, $|k|\geqq 2$.  Here $\gg = \oplus \gg^{-k,k}$ is the Hodge decomposition of $\gg_\mathbb{C}$ and $\gg^{-k}:= \gg^{-k,k}$.  If we can show that
    \[
    \ell(\gg)\geqq 3\text{ implies that }    \cL^-=\gg^-,\]
    then we are done. For this it would suffice to show that $\hh^{-1}=\gg^{-1}$.  This would be the case if we   show that the Hodge and root space decompositions of $\gg,\hh$ align, and then that the assumption $\ell(\gg)\geqq 3$ implies that all the simple roots of $\gg$ are also roots of $\hh$.\footnote{For the Lie theoretic aspects of Hodge theory that will be used here we suggest~\cite{R}.}
    
    \medbreak
    (ii) \emph{Proof of Theorem \ref{mainth} under the assumption \eqref{4.6} below.}  We will first give the argument under the assumption
    \begin{equation}\lab{4.6}
    \gg^{\pm} \hensp{bracket generates all of} \gg_\C.\end{equation}
    This assumption may not   be satisfied, but we shall show that in the particular circumstances at hand an   adaptation of the argument assuming it  gives the result.
    
Recall that the semi-simple Lie algebra $\gg$  has a weight zero \phs\ with the Hodge decomposition
    \[
    \gg_\C=\opplus \gg^k,\qquad \gg^{-k}=\gg^{-k,k} = \ol{\gg^{k}}.\]
    that was noted above.  Let $\gg'_\C$ be the complex Lie sub-algebra generated by $\gg^{\pm}$.  Then $\gg'_\C$ is the complexification of a real sub-algebra $\gg'_\R$.  Writing
    \[
    \gg_\R = \gg'_\R\oplus \mathfrak{l}_{\R}\]
    and denoting by $^\bot$ the orthogonal relative to the Cartan-Killing form on $\gg$ it follows that 
    \[
    \mathfrak{l}_R = (\gg'_\R)^\bot \subset \gg^{0}_\R\]
 is an ideal in $\gg$.  Since $\gg$ is semi-simple  and $[\gg^k,\gg^\ell]\subset \gg^{k+\ell}$ it follows that $\mathfrak{l}_R$ is a direct sum of factors $\wt \gg_i$ of $\gg$ where all $\wt \gg^\pm_i = (0)$. This implies 
  that $\exp\mathfrak{l}_R$ acts trivially on $D$ and so we may assume that $\mathfrak{l}_R$ is trivial.
    
    We then  have
     \vspth \beb
    \item $\gg$ is a Hodge Lie algebra;
    \item $\gg^+$ is a nilpotent sub-algebra of $\gg$;
    \item $\gg^+\oplus \gg^0$ is a parabolic sub-algebra with Levi factor $\gg^0$;
    \item the center of $\gg^0$ is contained in  a Cartan sub-algebra $\tt$, and, fixing a Borel subalgebra, we denote by $\beta_{i}$, $ i\in I$, the  positive simple $\tt$-roots of $\gg^+$ with corresponding root spaces~$\gg_{\beta_i}$.\eeb

  \vspth    \begin{Lem}\lab{4.7}
    $\gg^+$ is generated by $\gg^1$ if and only if $\gg_{\beta_i}\subset \gg^1$ for all $i\in I$.
  \vspth    \end{Lem}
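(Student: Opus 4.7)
The plan rests on the compatibility of the Hodge grading and the root space decomposition of $\gg$. The Hodge grading $\gg_\C=\bigoplus_k\gg^k$ is the decomposition into eigenspaces of $\ad(E)$ for a grading element $E\in Z(\gg^0)\subset\tt$, so every $\tt$-root space $\gg_\alpha$ lies in a single Hodge component $\gg^{k(\alpha)}$, and the assignment $\alpha\mapsto k(\alpha):=\alpha(E)$ is additive on the root lattice. Consequently $\gg^+$ is the span of the $\gg_\alpha$ with $\alpha$ a positive root of $k(\alpha)\geqq 1$, and $I$ indexes precisely those simple roots $\beta_i$ of $\gg$ with $k(\beta_i)\geqq 1$; the remaining simple roots (those of the Levi $\gg^0$) all have $k=0$.

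For the direction ($\Rightarrow$), suppose $\gg^+$ is Lie-generated by $\gg^1$, and fix $i\in I$. Then $E_{\beta_i}$ is a linear combination of iterated brackets of elements of $\gg^1$. Each iterated bracket of length $\ell$ of root vectors in $\gg^1$ is either zero or a scalar multiple of $E_{\gamma_1+\cdots+\gamma_\ell}$, where each $\gamma_s$ is a positive root with $k(\gamma_s)=1$. If $k(\beta_i)\geqq 2$, then $\beta_i$ would be expressible as a sum of $\ell=k(\beta_i)\geqq 2$ positive roots, violating simplicity. Hence $k(\beta_i)=1$ and $\gg_{\beta_i}\subset\gg^1$.

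For the direction ($\Leftarrow$), assume $k(\beta_i)=1$ for all $i\in I$. I would prove by induction on $k\geqq 1$ that $\gg^k$ lies in the Lie subalgebra generated by $\gg^1$. The base case $k=1$ is trivial, and the inductive step reduces to showing $\gg^k=[\gg^1,\gg^{k-1}]$ for $k\geqq 2$. The subspace $[\gg^1,\gg^{k-1}]$ is a $\gg^0$-submodule of the semisimple $\gg^0$-module $\gg^k$, so it suffices (since each irreducible $\gg^0$-summand is generated by any of its lowest weight vectors) to verify that every lowest weight vector $E_\mu$ of $\gg^k$ lies in $[\gg^1,\gg^{k-1}]$. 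Being a lowest weight for $\gg^0$, the positive root $\mu$ satisfies $(\mu,\delta)\leqq 0$ for each simple root $\delta$ of $\gg$ with $k(\delta)=0$. Because $(\mu,\mu)>0$, some simple root $\gamma$ of $\gg$ must satisfy $(\mu,\gamma)>0$, and this $\gamma$ is then forced to lie in $I$; hence $\mu-\gamma$ is a positive root of Hodge grading $k-1$, and the standard root string identity gives $[E_\gamma,E_{\mu-\gamma}]=cE_\mu$ with $c\neq 0$, so $E_\mu\in[\gg^1,\gg^{k-1}]$.

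The main technical hurdle is precisely this lowest-weight step. One must confirm that the inequalities forced by $\mu$ being a lowest weight for the Levi combine with the positivity of $(\mu,\mu)$ to produce a grading-lowering simple root $\gamma\in I$, and this is where the hypothesis $k(\beta_i)=1$ for every $i\in I$ enters essentially, ensuring that subtracting $\gamma$ drops the Hodge degree by exactly one. The remaining steps are routine bookkeeping with root systems and the semisimple structure of $\gg^0$-modules.
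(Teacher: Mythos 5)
Your proposal is correct. For the forward implication you argue exactly as the paper does: the Hodge grading is additive on roots, so the degree-$m$ part of the subalgebra generated by $\gg^1$ is spanned by $m$-fold brackets of grade-one root vectors, whose roots are sums of $m$ positive roots; a simple root $\beta_i$ with $\beta_i(E)\geqq 2$ can never arise this way. The difference is in the converse: the paper does not prove it but simply cites Theorem 3.2.1(1) and Definition 3.1.2 of \cite{CS}, whereas you supply a self-contained inductive argument showing $\gg^k=[\gg^1,\gg^{k-1}]$ for $k\geqq 2$ by checking that every lowest weight vector $E_\mu$ of the $\gg^0$-module $\gg^k$ lies in $[\gg^1,\gg^{k-1}]$: the lowest-weight condition forces $(\mu,\delta)\leqq 0$ for the grade-zero simple roots $\delta$, while $(\mu,\mu)>0$ produces a simple root $\gamma$ with $(\mu,\gamma)>0$, which must then lie in $I$ and hence have grade exactly one by hypothesis, so $\mu-\gamma$ is a positive root of grade $k-1$ and $[\gg_\gamma,\gg_{\mu-\gamma}]=\gg_\mu$. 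This is in substance the standard proof of the cited result for $|k|$-graded semisimple Lie algebras, and it has the virtue of making the proof of the lemma independent of \cite{CS}; it also dovetails with fact (3) listed before Proposition \ref{4.8} (the $V_{\beta_i}$ are generated by their lowest weight lines $\gg_{\beta_i}$). The only points worth spelling out in a final write-up are the one-dimensionality of root spaces (which makes the lowest-weight reduction clean even when $\gg^k$ has several irreducible summands) and the observation that $\mu-\gamma$ is automatically a \emph{positive} root because its grade $k-1$ is positive; neither is a gap.
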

    
    \begin{proof}
    To establish the implication $\implies$ we note that since every positive root is $\sum n_i\beta_i$ with $0\leqq  n_i\in \Z$, if some $\gg_{\beta_i}\not\in \gg^1$, then the algebra generated by $\gg^1$ will not contain $\gg_{\beta_i}$.  For the converse see Theorem 3.2.1 (1), and definition 3.1.2, in \cite{CS}
    \vspth  \end{proof}
    
    Now let $\cL\subset \gg$ be the  sub-algebra of $\gg$ generated by the $\hh^{\pm i}$, $i\geqq 2$. It is a sub-Hodge structure and by assumption   \eqref{4.6} is satisfied.

    \vspth In the proof of the following proposition there are three basic facts to keep in mind.

    \begin{enumerate}
        \item[(1)]  We have
        \[
            [\gg_{\alpha}, \gg_{\beta}] = \begin{cases}
                \gg_{\alpha + \beta}, & \alpha+\beta \in \Delta(\gg),\\
                \{0\}, & \alpha + \beta \notin \Delta (\gg).
            \end{cases}
        \]
        \item [(2)]  From $[\gg^k, \gg^{\ell}] \subset \gg ^{k+\ell}$ we see that each $\gg ^k$ is an $\gg ^0$-module.  Since $\tt \subset \gg ^0$, this implies that each $\gg ^k$, is a direct sum of root spaces;  let $\Delta(\gg ^k)$ denote the corresponding roots.  (In the case $k=0$, we must include the ``zero root space'' $\tt \subset \gg^0$.)

        \item[(3)]  Let $\Delta _0(\gg) = \{\beta _1, \dots, \beta_r \} $ denote the simple roots, then $\Delta _0(\gg^k) = \Delta (\gg^k) \cap \Delta _0 (\gg)$ denotes the simple roots $\beta _i$ with $\gg _{\beta _i} \subset \gg^k$.  The $\gg ^0$-module $\gg ^1$ decomposes as a direct sum $\oplus V_{\beta _i}$ of irreducible $\gg _0 $-submodules.  The $V_{\beta _i}$ are indexed by the simple roots $\beta _i \in \Delta _0(\gg ^1)$.  The root space $\gg _{\beta _i}$ is the lowest weight line in $V_{\beta _i}$.  We have $0 \neq \gg _{\alpha} \subset V_{\beta _i}$, if and only if $\alpha \equiv \beta_i$ modulo $\Delta (\gg^0)$.
    \end{enumerate}
    
   \vspth   \begin{Prop}\lab{4.8} Assume that $\gg^1$ generates $\gg ^+$ under the Lie bracket;  the level $\ell (\gg) \geq 3$;  and that $\hh ^k = \gg ^k$ for all $k \geq 2$.  Then $\hh ^+ = \gg ^+$.
   \vspth   \end{Prop}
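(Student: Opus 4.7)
The plan is to show $\hh^1 = \gg^1$; combined with the hypothesis $\hh^k = \gg^k$ for $k \geq 2$ this yields $\hh^+ = \gg^+$.  Let $\cL \subset \gg$ denote the Lie subalgebra generated by the $\gg^k$ with $|k| \geq 2$; since $\hh$ is Hodge and so conjugation-invariant, $\hh^{-k} = \gg^{-k}$ for $k \geq 2$ as well, and hence $\cL \subset \hh$.  Because each $\gg^k$ is a $\gg^0$-module and $\ad(X)$ for $X \in \gg^0$ is a derivation of $\gg$ preserving the generators of $\cL$, it follows that $\cL$ is $\ad(\gg^0)$-stable; in particular each intersection $\cL^k := \cL \cap \gg^k$ is a $\gg^0$-submodule of $\gg^k$.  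By construction $\cL^k = \gg^k$ for $|k| \geq 2$, so it suffices to prove $\cL^1 = \gg^1$.

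By Fact~(3), $\gg^1 = \oplus_{i \in I}\, V_{\beta_i}$ decomposes into irreducible $\gg^0$-modules with lowest weight line $\gg_{\beta_i}$, where $\beta_i$ ranges over the simple roots of $\gg$, all of which lie in $\gg^1$ by Lemma~\ref{4.7}.  Since $\cL^1$ is a $\gg^0$-submodule of $\gg^1$, it must be a direct sum of some subset of the $V_{\beta_i}$'s, so it suffices to exhibit $\gg_{\beta_i} \cap \cL \neq 0$ for each simple root $\beta_i$.  Here I would use the level $\geq 3$ hypothesis: every simple factor $\gg_s$ of $\gg$ contains a root of Hodge height $\geq 3$, so its highest root $\theta_s$ satisfies $h(\theta_s) \geq 3$ and has $\beta_i$ in its support.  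Fact~(1) then allows one to build a descending bracket chain from $\gg_{\theta_s}$ down to $\gg_{\beta_i}$: the one-step identity $\gg_{\beta_i} = [\gg_\gamma, \gg_{-\alpha}]$ with $\gamma \in \Delta(\gg^{k+1})$, $\alpha \in \Delta(\gg^k)$, $k \geq 2$, and $\gamma = \alpha + \beta_i$ handles ``boundary'' simple roots, whereas ``interior'' ones (such as $\beta_2$ in $A_3$, for which no single bracket of $\gg^{k+1}$ and $\gg^{-k}$ with $k \geq 2$ lands on $\gg_{\beta_2}$) require iterated brackets of the form $[\gg^k, [\gg^m, \gg^{-(k+m-1)}]]$, descending first into $\gg^{-1}$ and then re-ascending into $\gg^1$.

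The main obstacle is the combinatorial existence of such bracket chains for every simple root in every simple factor of level $\geq 3$; this is a pure statement about root systems equipped with a Hodge grading.  One approach is case-by-case analysis using the Dynkin classification together with the Hodge-root alignment of \cite{R}; a more uniform approach is induction on the distance of $\beta_i$ (in the Dynkin diagram) from a boundary simple root for which the one-step identity already produces $\gg_{\beta_i}$, at each stage using a previously constructed simple-root vector together with a suitable element of $\gg^{\pm k}$ ($k \geq 2$) to reach the next one.  Once $\gg_{\beta_i} \subset \cL$ has been verified for every simple root, the $\gg^0$-invariance of $\cL^1$ immediately upgrades this to $V_{\beta_i} \subset \cL^1$ for every $i$, giving $\cL^1 = \gg^1$ and hence $\hh^+ = \gg^+$ as required.
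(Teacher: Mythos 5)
Your reduction is sound as far as it goes: $\cL$ (the paper's $\tilde\gg$) is a $\gg^0$-submodule, $\cL^k=\gg^k$ for $|k|\geq 2$, and since each $V_{\beta_i}$ is generated as a $\gg^0$-module by its lowest weight line $\gg_{\beta_i}$, it would indeed suffice to show $\gg_{\beta_i}\subset\cL$ for every simple root $\beta_i$. But that is exactly where you stop: you name the combinatorial problem (``the main obstacle is the existence of such bracket chains for every simple root in every simple factor of level $\geq 3$''), observe that the one-step identity fails for interior simple roots such as $\beta_2$ in $A_3$, and then offer only two candidate strategies (case-by-case over the Dynkin classification, or induction on distance to a boundary node) without carrying either out. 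This is a genuine gap, not a routine verification: for an interior simple root one must produce an iterated chain whose intermediate terms all demonstrably lie in $\cL$, and while terms in degrees $|k|\geq 2$ are free and terms like $[\gg^m,\gg^{-(m+1)}]\subset\cL\cap\gg^{-1}$ are brackets of generators, showing that some such chain terminates exactly on $\gg_{\beta_i}$ for \emph{every} $\beta_i$ is a nontrivial root-system statement that your proposal does not prove.

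The paper avoids this per-root chase entirely. It decomposes $\gg^1=\tilde\gg^1\oplus V^1$ as $\gg^0$-modules, shows $[\tilde\gg^1,V^1]=0$ (else a bracket into $\gg^2=\tilde\gg^2$ would pull $V^1$ back into $\tilde\gg$), deduces inductively that $\gg^+=\tilde\gg^+\oplus V^+$ with the two subalgebras generated by $\tilde\gg^1$ and $V^1$ and supported on disjoint sets of simple roots, and then invokes the highest root $\tilde\alpha$: since every simple root appears with nonzero coefficient in $\tilde\alpha$, the root space $\gg_{\tilde\alpha}$ can lie in only one summand, forcing $\Delta_0(\tilde\gg^1)$ or $\Delta_0(V^1)$ to be empty. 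The level hypothesis is then needed only to exhibit a \emph{single} nonzero element $0\neq[\gg^3,\gg^{-2}]\subset\tilde\gg^1$, which rules out $\tilde\gg^1=0$ and gives $V^1=0$. If you want to salvage your approach, you must either supply the missing combinatorial lemma in full generality or switch to an argument of this complementary-summand type that needs only one simple root space in $\cL$ rather than all of them.
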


   \vspth  The proposition is a corollary of the following lemma.

    \vspth    \begin{Lem}\lab{4.9}
        Assume that $\gg ^1$ generates $\gg^+$, and that the level $\ell (\gg) \geq 3$.  Let $\tilde{\gg}$ be the subalgebra generated by $\gg ^{\pm k}, k \geq 2$.  Then $\tilde{\gg}$ contains $\gg ^{\pm 1}$.
    \vspth \end{Lem}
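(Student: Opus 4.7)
The plan is to reduce to showing $\gg^1 \subset \tilde\gg$; the opposite inclusion $\gg^{-1} \subset \tilde\gg$ follows from $\tilde\gg$ being closed under complex conjugation, since $\overline{\gg^k} = \gg^{-k}$ and the generators come in conjugate pairs. By Lemma~\ref{4.7} and fact~(3), $\gg^1 = \oplus_{i \in J} V_{\beta_i}$ decomposes into irreducible $\gg^0$-submodules indexed by $J = \Delta_0(\gg^1)$, each with lowest weight line $\gg_{\beta_i}$, so it is enough to ensure that every $V_{\beta_i}$ meets $\tilde\gg$.

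First I would form the ``obvious'' $\gg^0$-submodule
\[
U := \sum_{k\ge 2}\,[\gg^{-k},\,\gg^{k+1}] \;\subset\; \tilde\gg \cap \gg^1,
\]
which is $\ad(\gg^0)$-invariant by Jacobi, hence a direct summand $\oplus_{i\in S} V_{\beta_i}$ for some $S \subseteq J$. Since $\gg^+$ is generated by $\gg^1$ we have $\gg^3 = [\gg^1,\gg^2]$, and the level~$\geq 3$ hypothesis then provides $\nu \in \Delta(\gg^3)$ which decomposes as $\nu = \alpha + \mu$ with $\alpha \in \Delta(\gg^1)$, $\mu \in \Delta(\gg^2)$; fact~(1) yields $\gg_\alpha = c\cdot[\gg_{-\mu},\gg_\nu] \subset U$ for a nonzero scalar $c$, so $S$ is nonempty. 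If $S \subsetneq J$ I would iterate: the conjugate $\overline U \subset \tilde\gg^{-1}$ produces further level-$1$ elements via $[\gg^2,\overline U] \subset \tilde\gg^1$, which (by fact~(1)) reaches weights $\mu - \alpha$ whenever $\gg_{-\alpha} \in \overline U$, $\mu \in \Delta(\gg^2)$, and $\mu - \alpha \in \Delta(\gg^1)$ — typically capturing the $V_{\beta_i}$'s missed by $U$ itself. In parallel, $\tilde\gg^0 \supset \sum_{k\ge 2}[\gg^k,\gg^{-k}]$ contains the coroots $h_\alpha$ for all $\alpha \in \Delta(\gg^{\pm k})$, $k \ge 2$; a root-system argument using simplicity and level~$\geq 3$ shows that these coroots span $\tt$, so $\tt \subset \tilde\gg^0$ and $\tilde\gg$ becomes a root-space-graded subspace of $\gg$.

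The main obstacle is showing that this iteration exhausts $\gg^1$. The cleanest framing is to prove that $\tilde\gg$, being graded and conjugation-closed, is an \emph{ideal} of $\gg$: for each pair of degrees $(\ell,k)$ the bracket $[\gg^\ell,\tilde\gg^k] \subset \gg^{\ell+k}$ automatically lies in $\tilde\gg$ when $|\ell+k|\ge 2$, while the residual cases $|\ell+k| \le 1$ are handled by the iterative $[\gg^{\pm 2},\tilde\gg^{\mp 1}]$ and $[\gg^{\pm k},\tilde\gg^{\mp k}]$ brackets above, using the Hodge/root alignment of fact~(2). Since $\gg$ is simple by assumption~\eqref{3.5} and $\tilde\gg \ne 0$, this forces $\tilde\gg = \gg$. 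The level~$\geq 3$ hypothesis enters crucially in seeding the iteration via $\gg^3 \ne 0$; without it the argument genuinely fails, as in the classical and weight-$2$ cases where the proposition does not apply.
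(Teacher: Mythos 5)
Your setup matches the paper's: reduce to $\gg^1\subset\tilde\gg$, observe that $\tilde\gg\cap\gg^1$ is a $\gg^0$-submodule of $\gg^1=\oplus_i V_{\beta_i}$, and seed it with $0\neq[\gg^3,\gg^{-2}]\subset\tilde\gg^1$ using $\ell(\gg)\geq 3$ and $\gg^3=[\gg^1,\gg^2]$ (this is exactly the paper's Lemma \ref{4.10}). But the passage from ``$\tilde\gg^1\neq 0$'' to ``$\tilde\gg^1=\gg^1$'' is where the content lies, and neither of your two mechanisms delivers it. The ``iteration'' cannot work even in principle: writing $\gg^1=\tilde\gg^1\oplus V^1$ as $\gg^0$-modules, one checks (as the paper does) that $[\tilde\gg^1,V^1]=0$ --- if $0\neq[\gg_\alpha,\gg_{\alpha'}]\subset\gg^2=\tilde\gg^2$ with $\gg_\alpha\subset\tilde\gg^1$, then $\gg_{\alpha'}=[\gg_{\alpha+\alpha'},\gg_{-\alpha}]\subset\tilde\gg$ --- so that $V^1$ generates a subalgebra $V^+$ with $\gg^+=\tilde\gg^+\oplus V^+$; brackets of $\tilde\gg$ against $\gg^{\pm k}=\tilde\gg^{\pm k}$, $k\geq 2$, stay inside $\tilde\gg$ and never reach $V^1$. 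The ``cleanest framing'' via ideals is circular: the residual cases you must verify, e.g.\ $[\gg^{-1},\gg^2]\subset\tilde\gg^1$, involve bracketing with $\gg^{\pm1}$, which is not yet known to lie in $\tilde\gg$, and establishing them is essentially equivalent to the lemma itself. Asserting that these cases ``are handled by the iterative brackets above'' is the gap.

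What actually closes the argument in the paper is a global root-theoretic fact, not bracketing: since $\gg^+=\tilde\gg^+\oplus V^+$ with $\Delta_0(\tilde\gg^1)$ and $\Delta_0(V^1)$ disjoint, the highest root $\wt\alpha$ (all of whose simple-root coefficients are nonzero) can lie in $\Delta(\tilde\gg^+)$ only if $\Delta_0(V^1)=\emptyset$, and in $\Delta(V^+)$ only if $\Delta_0(\tilde\gg^1)=\emptyset$; since $\tilde\gg^1\neq0$ by the level-$3$ seed, $V^1=0$. This is the step your proposal is missing, and it is also the only place simplicity of $\gg$ is genuinely used --- your appeal to simplicity via ``$\tilde\gg$ is a nonzero ideal'' presupposes what is to be proved. (Your side claim that the coroots of $\Delta(\gg^{\pm k})$, $k\geq2$, span $\tt$ is also left unproved, though it is not needed once the argument is organized as above.)
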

   
    \begin{proof}
    It suffices to show that $\gg ^1 \subset \tilde{\gg}$.  Each $\gg ^k$ is a $\gg _0$-module.  It follows from the Jacobi identity that $\tilde{\gg}$ is a $\gg ^0$-module.  In particular, $\tilde{\gg} = \oplus \tilde{\gg}^k$, where $\tilde{\gg}^k = \gg ^k \cap \tilde{\gg}$.  We have $\tilde{\gg}^k = \gg ^k$ for all $|k| \geq 2$.

    Let $\gg^1 = \tilde{\gg}^1 \oplus V^1$ be the $\gg ^0$-module decomposition.

    We claim that $[\tilde{\gg}^1, V^1] = 0$.  Assume not.  Then there exists $\gg _{\alpha} \subset \tilde{\gg}^1$ and $\gg_{\alpha '} \subset V^1$ so that $0 \neq \gg _{\alpha + \alpha '} = [\gg _{\alpha}, \gg _{\alpha '}] \subset \gg^2 = \tilde{\gg}^2$.  Then $\gg _{\alpha '} = [\gg _{\alpha + \alpha '}, \gg _{-\alpha}] \subset [\tilde{\gg}, \tilde{\gg}] = \tilde{\gg}$, a contradiction.

    So $\gg^2 = [\gg^1,\gg^1]= [\tilde{\gg}^1,\tilde{\gg}^1]+[V^1, V^1]$.  We claim that the sum is necessarily a direct sum
    \[
    \gg ^2 = [\gg^1,\gg^1]= [\tilde{\gg}^1,\tilde{\gg}^1] \oplus [V^1, V^1].
    \]
    To see why note that the root spaces $\gg _{\alpha}$ of $[\gg^1,\gg^1]$ are all of the form $\alpha \equiv \beta _i + \beta _j$ modulo $\Delta (\gg _0)$ with $\beta _i, \beta _j \in \Delta _0 (\tilde{\gg}^1)$.  Likewise the root spaces $\gg _{\alpha '}$ of $[V^1, V^1]$ are all of the form $\alpha ' \equiv \beta ' _i + \beta ' _j$ modulo $\Delta(\gg _0)$ with $\beta '_i, \beta '_j \in \Delta_0(V^1)$.  The claim then follows from the fact that $\Delta _0 (\tilde{\gg}^1)$ and $\Delta _0 (V^1)$ are disjoint sets.

    Next, the Jacobi identity implies
    \[
    \gg^3 = [\gg ^1, [\gg^1, \gg^1]] = [\tilde{\gg}^1,[\tilde{\gg}^1,\tilde{\gg}^1]] \oplus [V^1, [V^1,V^1]].
    \]
    Continuing inductively, we see that
    \[
    \gg ^+ = \tilde{\gg}^+ \oplus V^+,
    \]
    with $\tilde{\gg}^+$ the algebra generated by $\tilde{\gg}^1$, and $V^+$ the algebra generated by $V^1$.  We claim that this forces $V^+ = 0$.  Assuming the claim for the moment, we have $\tilde{\gg}^1 = \gg ^1$.  This establishes the lemma.

    To see why the claim $V^+ = 0$ holds, note that $\Delta(\gg^+)$ must contain the highest root $\tilde{\alpha}$.  The coefficient of $\beta _i \in \Delta _0(\gg)$ in $\tilde{\alpha}$ is necessarily nonzero for {\it every} simple root $\beta _i$.  But if $\tilde{\alpha} \in \Delta(\tilde{\gg}^+)$, then the coefficient of $\beta _i \in \Delta _0 (V^1)$ is necessarily zero.  That is, $\tilde{\alpha} \in \Delta(\tilde{\gg}^+)$ implies $\Delta _0 (V^1) = \emptyset$ (which is equivalent to $V^1 = 0$).  Likewise if $\tilde{\alpha} \in \Delta(V^+)$, then $\Delta _0(\tilde{\gg}^1) = \emptyset$.  So one of $\Delta _0 (\tilde{\gg}^1$) or $\Delta _0 (V^1)$ must be empty.  By hypothesis $[\gg^1, \gg^2] = \gg^3 \neq 0$;  this implies $0 \neq [\gg^3, \gg^{-2}] \subset \tilde{\gg}^1$.
     \vspth \end{proof}
    
    The final  step using $\ell(\gg)\geqq 3$ is the
     \vspth \begin{Lem}\lab{4.10}
    $\ell(\gg)\geqq 3\implies [\gg^{-2},\gg^3]\subset \hh \cap \gg^1$ is non-zero.
   \vspth   \end{Lem}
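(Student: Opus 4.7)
The plan is to prove containment and nonvanishing separately. The containment $[\gg^{-2},\gg^3]\subset \hh\cap\gg^1$ is automatic: the bracket respects the grading so it lies in $\gg^1$, and since \eqref{4.4} gives $\gg^{-2}=\hh^{-2}$ and $\gg^3=\hh^3$, we have $[\gg^{-2},\gg^3]=[\hh^{-2},\hh^3]\subset\hh$. The entire content of the lemma is the nonvanishing, so my effort goes there.

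For the nonvanishing, the strategy is to descend to root spaces and exhibit a single nonzero bracket. I would first invoke Lemma \ref{4.7}: since $\gg^1$ generates $\gg^+$, every simple root space $\gg_{\beta_i}$ lies in $\gg^1$. This pins down the grading element $E$ by $\beta_i(E)=1$ for each $i$, so for any positive root $\alpha$ one has $\alpha(E)=\mathrm{ht}(\alpha)$, and $\Delta(\gg^k)$ is exactly the set of positive roots of height $k$.

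Next I would exploit $\ell(\gg)\geq 3$ to pick a positive root $\gamma$ of height $3$. A standard root-system fact says any positive root of height $\geq 2$ admits a decomposition $\gamma=\beta_i+\delta$ with $\beta_i$ simple and $\delta$ a positive root, necessarily here of height $2$; hence $\gg_\delta\subset\gg^2$ and $\gg_{-\delta}\subset\gg^{-2}$. Since $\gamma+(-\delta)=\beta_i$ is a nonzero root, fact $(1)$ preceding Proposition \ref{4.8} gives $[\gg_\gamma,\gg_{-\delta}]=\gg_{\beta_i}\neq 0$, witnessing a nonzero element of $[\gg^{-2},\gg^3]\subset\hh\cap\gg^1$.

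No step poses a serious obstacle: once Lemma \ref{4.7} collapses the grading to the height function, the lemma reduces to the elementary observation that a height-$3$ root can be peeled off by a simple root. The conceptual point is that the hypothesis ``$\gg^1$ generates $\gg^+$'' is doing the heavy lifting by aligning the Hodge and root decompositions; the level hypothesis $\ell(\gg)\geq 3$ then immediately produces a nontrivial bracket landing in $\gg^1\cap\hh$.
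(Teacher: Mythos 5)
Your argument is correct and is essentially the paper's own proof: the paper likewise uses Lemma \ref{4.7} to write a root of $\gg^3$ as $\beta_1+\beta_2+\beta_3$ with $\beta_1+\beta_2\in\Delta(\gg^2)$, and brackets $\gg_{\beta_1+\beta_2+\beta_3}$ against $\gg_{-\beta_1-\beta_2}$ to land in a nonzero simple root space inside $\gg^1$ (the containment in $\hh\cap\gg^1$ being, as you say, automatic from the grading and \eqref{4.4}). One caveat: $\alpha(E)$ equals the height of $\alpha$ only when every simple root of $\gg$ is noncompact, whereas Lemma \ref{4.7} constrains only the simple roots lying in $\gg^+$, so in general some simple roots lie in $\Delta(\gg^0)$ and your peeled-off $\beta_i$ could have $\beta_i(E)=0$; this is repaired either by peeling until you remove a $\beta_i$ with $\beta_i(E)=1$ or, more directly, by using $\gg^3=[\gg^1,\gg^2]\ne 0$ to produce $\alpha\in\Delta(\gg^1)$, $\delta\in\Delta(\gg^2)$ with $\alpha+\delta$ a root, so it does not affect the substance.
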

    \begin{proof}
Since $\gg^1$ generates $\gg^+$ and $\gg^1$ is spanned by simple positive root vectors, there exist simple positive roots $\beta_1,\beta_2,\beta_3$ such that $\beta_1+\beta_2+\beta_3$ is a root.  Then $\gg_{-\beta_1-\beta_2}\in \gg^{-2}$ and
  \vspth\[
\Big[\gg_{\beta_1+\beta_2+\beta_3}, \gg_{-\beta_1-\beta_2}\Big] = \gg_{\beta_3}\ne 0.\qedhere\]
     \end{proof}
     
     (iii) \emph{Discussion of assumption of \eqref{4.6}.}
     The fact that we may assume \eqref{4.6} uses the following result from Proposition 3.10 in \cite{R},\footnote{The instances of $\mathbb{Q}$ in loc. cit. are typos;  the $\mathbb{Q}$ should be $\mathbb{R}$.}
     
     \begin{Thm} \lab{4.11}
     Let $G$ be a \mtg\ and $D_G=G(\R)/H_0$ a \mtd.  Any point $o\in D_G$ defines a weight zero Hodge structure on the Lie algebra $\gg$ of $G$.  Let $\wt \gg\subset \gg$ be the real semi-simple Lie subalgebra generated by $\gg^{-1,1}\oplus \gg^{1,-1}$.  Then $\wt \gg\subset \gg$ is a real sub-Hodge structure.  Any connected integral manifold of the horizontal sub-bundle corresponding to $\gg^{-1,1}\subset T_0 D_G$ is contained in $D_{\wt G}=\wt G(\R)o$ where $\wt G(\R)$ is the connected real Lie group with Lie algebra $\wt \gg$.  The horizontal sub-bundle $T^h D_{\wt G}\subset TD_{\wt G}$ is bracket generating; equivalently, $\wt \gg^{-1,1}$ generates $ \wt\gg^{-}$ under Lie bracket.
     \end{Thm}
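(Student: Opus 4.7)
The plan is to separate Theorem~\ref{4.11} into four assertions: (i) $\wt\gg\subset \gg$ is a real sub-Hodge structure; (ii) $\wt\gg$ is semi-simple; (iii) any connected integral manifold of the horizontal sub-bundle through $o$ lies in $D_{\wt G}=\wt G(\R)o$; and (iv) $\wt\gg^{-1,1}$ generates $\wt\gg^-$ under Lie bracket, equivalently $T^h D_{\wt G}$ is bracket generating. I expect (iv) to be the main obstacle; (iii) will follow from it together with an equivariance argument, while (i)--(ii) are comparatively direct consequences of the Hodge-theoretic structure on $\gg$.

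For (i), the key observation is that $[\gg^{-p,p},\gg^{-q,q}]\subset \gg^{-(p+q),p+q}$, so the Lie bracket respects the Hodge grading. The generating set $\gg^{-1,1}\oplus\gg^{1,-1}$ is closed under complex conjugation since $\overline{\gg^{-1,1}}=\gg^{1,-1}$, so $\wt\gg_\C$ is a graded Lie subalgebra of $\gg_\C$ stable under conjugation, hence the complexification of the real form $\wt\gg=\wt\gg_\C\cap \gg_\R$, with induced Hodge grading $\wt\gg_\C=\oplus_k \wt\gg^{-k,k}$, where $\wt\gg^{-k,k}=\wt\gg_\C\cap \gg^{-k,k}$. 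For (ii), the Cartan involution $\theta$ associated with the Hodge structure at $o$, which acts as $(-1)^k$ on $\gg^{-k,k}$, preserves the generating set and hence $\wt\gg$. The symmetric bilinear form $(X,Y)\mapsto -B(X,\theta Y)$, with $B$ the Killing form of $\gg$, is positive definite on $\gg_\R$, so its restriction to $\wt\gg$ is non-degenerate; a standard argument then gives that $\wt\gg$ is reductive and, since $\gg$ has trivial center and $\wt\gg$ is generated by elements in non-zero weight spaces, semi-simple.

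The heart of the proof is (iv). Let $\cS^-\subset \wt\gg^-$ be the Lie subalgebra generated by $\gg^{-1,1}=\wt\gg^{-1,1}$, and let $\cS^+=\overline{\cS^-}\subset\wt\gg^+$ be its complex conjugate. The plan is to show that $\cL:=\cS^-+[\cS^-,\cS^+]+\cS^+$ is itself a Lie subalgebra of $\wt\gg$; since $\cL$ contains the generators $\gg^{\pm 1}$ of $\wt\gg$, this forces $\cL=\wt\gg$, and projecting onto the Hodge-graded pieces gives $\cS^-=\wt\gg^-$. Closure of $\cL$ under bracket is verified by induction on the weight using the Jacobi identity together with the fact that $\wt\gg^0:=\wt\gg_\C\cap \gg^{0,0}$ preserves $\cS^\pm$ as $\wt\gg^0$-modules, in the same spirit as the $[\tilde{\gg}^1,V^1]=0$ step of Lemma~\ref{4.9} above; this is where the delicate graded Jacobi bookkeeping sits, and is the main technical obstacle. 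Once (iv) is in hand, assertion (iii) follows: at a point $g\cdot o\in D_G$ with $g\in\wt G(\R)$, the horizontal subspace is $\Ad(g)\gg^{-1,1}\subset \Ad(g)\wt\gg_\C$, so the horizontal distribution on $D_G$ restricts to a subdistribution of $TD_{\wt G}$, and any connected integral manifold through $o$ stays inside $D_{\wt G}$ by the usual maximal integral submanifold argument. The bracket-generating statement for $T^h D_{\wt G}$ is then the infinitesimal form of (iv).
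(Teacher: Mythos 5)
The paper does not actually prove Theorem \ref{4.11}: it is quoted from Proposition 3.10 of \cite{R}, so your proposal must stand as a free-standing argument. Your parts (i)--(iii) are essentially sound as sketches: gradedness plus conjugation-stability of the generating set gives the sub-Hodge structure; the Cartan involution $\Ad\vp(i)$ gives reductivity (to kill the center you still need a short argument, e.g.\ that a graded central element of degree $\pm1$ is a nilpotent of $\gg$ commuting with its conjugate, hence isotropic for the Killing form, contradicting nondegeneracy of $B$ on $\wt\gg$); and for (iii) the relevant integrable, $\wt G(\R)$-invariant distribution is the one with fibre $\wt\gg+\hh_0$ modulo $\hh_0$ at $o$, whose leaf through $o$ is open in $D_{\wt G}$.

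The genuine gap is in (iv), which you rightly call the heart of the matter but do not prove. There are two problems. First, closure of $\cL=\cS^-+[\cS^-,\cS^+]+\cS^+$ under bracket is exactly the hard step and is only asserted; already a term like $[[\cS^{-1},\cS^{+3}],\cS^{+k}]$ cannot be handled by Jacobi without first knowing $[\gg^{-1},\cS^{+3}]\subset\cS^{+2}$, which is an instance of the statement being proved. Second, and more fundamentally, even granting $\cL=\wt\gg$ the final step fails: $[\cS^-,\cS^+]$ is \emph{not} concentrated in degree zero --- it contains, for example, $[\cS^{-k-1},\gg^{+1}]\subset\gg^{-k}$ --- so projecting onto graded pieces yields only $\wt\gg^{-k}=\cS^{-k}+\bigl([\cS^-,\cS^+]\cap\gg^{-k}\bigr)$, not $\wt\gg^{-k}=\cS^{-k}$; and proving $[\cS^{-k-1},\gg^{+1}]\subset\cS^{-k}$ is essentially equivalent to the theorem, so the argument is circular. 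The workable route is the root-theoretic one the paper itself sets up in Lemma \ref{4.7} and \eqref{4.14}: choose a Cartan subalgebra $\tt\subset\gg^{0,0}$ containing the grading element $E$ and a positive system for $\wt\gg$ compatible with the grading, so all simple roots have nonnegative $E$-value. If some simple root $\beta$ of $\wt\gg$ had $\beta(E)\geqq 2$, then every root of $\wt\gg$ of $E$-value $\pm1$ would have $\beta$-coefficient zero, hence so would every iterated bracket of such root spaces, and the algebra generated by $\gg^{\pm1}$ would miss $\gg_{\pm\beta}$ --- contradicting the definition of $\wt\gg$. Thus every simple root of $\wt\gg$ has value $0$ or $1$, and the converse direction of Lemma \ref{4.7} (equivalently \eqref{4.14}) gives that $\wt\gg^{-1,1}$ generates $\wt\gg^-$.
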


     The general picture is that $\wt \gg$ generates an integrable sub-bundle of $TD_G$ and the $D_{\wt G}=\wt G(\R)_o$ above is a leaf of the corresponding foliation.
     
     Using this theorem the assumption \eqref{4.6} may be dropped thus completing the proof of Theorem  \ref{mainth}.\hfill\qed\medbreak
     
     The result \ref{4.11}, specifically the last sentence, is the key to where the integrability conditions imposed by transversality kick in to give atypicality of Hodge loci.
     
     We also note that in   \cite{R} it is shown that for the grading element $E\in \hh$ defined by condition $[E,X]=pX$ for all $X\in \hh^{p,-p}$ (thus $E\in$ center of $\hh^{0,0}$)
     \begin{equation}\lab{4.12} 
     \ell(\hh) = \wt\alpha (E)\end{equation}
     where $\wt\alpha$ is the highest root.

Using this we can see that it is \emph{not} always the case that $\hh^{-1,1}$ generates $\hh^{-,+}$.  To construct examples   let
     \[
     \vp :S^1 \to H(\R)\]
     be the circle defining the complex structure on $D_H$.  Then for the $E$ defined above, we can choose a Cartan subalgebra $\tt\subset \hh_\C$ and a set $\Delta\subset \tt$ of simple roots such that $E\in\tt$ and $0\leqq \alpha(E)\in \Z$ for all $\alpha\in\Delta$.  Then it can be shown (loc.\ cit.)\ that
     \begin{equation}\lab{4.14}
     \hh^{1,-1}\hensp{generates} \hh^{+,-}\iff \alpha(E)\in\{0,1\},\qquad \alpha\in \Delta.\end{equation}

    Denote by $E_\gg$ the grading element for $\gg$.  Then from \eqref{4.12} we have
    \begin{equation}\lab{4.13}
         \ell(\gg)\leqq 2\iff \wt \alpha(E_\gg)\leqq 2\end{equation}
         where $\wt\alpha$ is the highest root.   
        Thus, \emph{if} we know the  Mumford-Tate Lie algebra $\gg$, then this provides a   test for when the main theorem applies.
         
         \section{Reprise}\label{sec5}
         (i) In the second example above if  we assume that the algebra $\Hg^\bull(X)_\prim$ corresponding to $H^4(X)_\prim$ is generated by $Q$ and $\la$, then
         \[
         G_\la := \{g\in \Aut(V,Q):g\la = \la\}\]
         is the \mtg.  In \eqref{3.12} we take $H=G_\la$, $\Ga_\la=\Ga\cap G_\la$ and set $D_H=D_\la\subset D$.  Then it can be shown that 
         \setcounter{equation}{0}
         \begin{equation}
\lab{5.1}
\codim_{\Ga\bsl D} \lrp{\Phi(B)\cap (\Ga_\la\bsl D_\la)} = \codim_{\Ga\bsl D}(\Ga_\la\bsl D_\la)+\codim_{\Ga\bsl D} \Phi(B)-\dim \sig(\la).\end{equation}
Thus $\dim\sig(\la)$ is the correction term needed to convert the inequality \eqref{3.12} into an equality.
\medbreak

(ii)
Referring to the first example, for $d=\deg X\geqq 5$ 
\begin{equation}\lab{5.2}
d-3 = \codim \NL_\la<h^{2,0}(X)\end{equation}
holds only for $X$'s containing a line $\La$.  In this case the strict inequality holds for geometric, not Hodge theoretic, reasons.  Namely, if $\om\in H^0(\Om^2_X)$ has divisor $(\om) \supset \La$ containing the line, then for any $\theta\in T$ we have
\[
\lra{\om,\theta\cdot\la}=0\]
due to $\theta\cdot \om\big|_\la=0$ at the form level.

This phenomenon is general; we may say that the correction term needed to have equality in \eqref{3.13} is always positive if $\gg^3\ne (0)$, and in particular cases it may be greater than it is for a general Hodge locus in $B$ due to geometric reasons peculiar to the particular Hodge locus.

These considerations raise the following

{\setbox0\hbox{(1)}\leftmargini=\wd0 \advance\leftmargini\labelsep
 \begin{itemize}
\item[{\bf Q}:] Given $(V,Q,F^\bull)$ with \mtg\ $G$ where $\gg^3 \ne 0$, is there a \emph{uniform bound} depending only $\gg$ for the correction term needed to convert \eqref{3.12} into an equality?
\end{itemize}
}

         (iii) An example where $\gg^3 =(0)$ is given by a variation of \phs s of the form $\V_b=\V'_b\otimes \V''_b$ where each of the factors has weight 2.  As discussed above the condition $\gg^3=0$ is a non-linear PDE and the general Cartan theory of exterior differential systems gives in principle an algorithmic process for determining all local solutions.  The   example given just above is the only one we know in weight $n=4$ where $\gg^{-1,1}\ne 0$ and $\gg^3=0$.

         (iv) \emph{Hypersurface example.}  Let $X\subset \P^{n+1}$ be a smooth degree $d$ hypersurface given by an equation
         \begin{equation}\lab{5.3}
         F(x)=0\end{equation}
         where $x=[x_0,\dots,x_{n+1}]$ and $F(x)$ is homogeneous of degree $d$.  For  
         \[
         \bcs
         S^\bul= \C[x_0,\cdots,x_{n+1}],\\
         J^\bul_F= \text{Jacobian ideal }\{F_{x_0},\cdots,F_{x_{n+1}}\},\\
         R^\bul=S^\bul/J^\bul_F\ecs\]
         it is well known (\cite{G3}) that there is an isomorphism
         \[
         H^{p,n-p}(X)_\prim\cong R^{(n-p)d+n-2}.\]
         Moreover the tangent space to the family of $X$'s modulo projective equivalence is
         \[T\cong R^d\]
         and the maps
         \[
         T\otimes H^{p,n-p}(X)_\prim \to H^{p-1,n-p+1}(X)_\prim \leqno{({\rm V}.4)_p}\]
         are given by multiplication of polynomials
         \[
         R^d \otimes R^{(n-p)d+n-2}\to R^{(n-p+1)d+n-2}.\leqno{({\rm V}.5)_p}\]
         Finally since $X$ is non-singular, it follows from Macaulay's theorem that
         \setcounter{equation}{5}
         \begin{equation}\lab{5.6}
         \bmpt{4.25}{\emph{the mappings ${({\rm V}.5)_p}$ are non-zero whenever both sides are non-zero}}\end{equation}
         (cf.\ \cite{G3}).
         
         If $G$ is the \mtg\ for the period mapping of $X$'s as above, then we have 
         \[
         R^d\to \gg^{-1,1}\subset F^{-1}\End(V,Q).\]
         The image of this map is an \emph{abelian} sub-algebra $\AA\subset \gg^{-1,1}\subset \gg$.  There is an induced map 
         \begin{equation}\lab{5.7}
         \Sym^k\AA\to\gg^{-k,k}.\end{equation}
        This then gives
                  \[
         \Sym^k R^d\to \gg^{-k,k}\subset \oplus \Hom\lrp{H^{p,n-p}(X)_\prim,H^{n-k,n-p+k}(X)_\prim}\]
         which is just the map
         \[
         \Sym^k R^d \otimes R^{(n-p)d+n-2} \to R^{(n-p+k)d+n-2}\]
         given by multiplication of polynomials.  From \eqref{5.6} we may conclude that the map is non-zero whenever both sides are non-zero, which then gives that $\gg^{-3,3}\ne 0$ for $n\geqq 3, d>3$.\hfill\qed

         \medbreak (iv) \emph{The coupling length:} This is defined by $\zeta(\AA)=\max\{m:\Sym^m\AA\allowbreak\to \Hom(\V^{n,0}_b,\V^{n-m,m}_b)$ is $\ne 0\}$ at a general point of $b$.  The same argument as in the hypersurfaces example then gives \eqref{1.3} above.  There are many examples where this holds.
         
         \medbreak (v) The main Theorem III.14 from \cite{BKU} is closely related to the Ax-Schanuel conjecture, which has been established in \cite{BT}.  To state the result we keep the above notations and denote by $\CD$ the compact dual to the \mtd\ $D$.  Then $\CD=G(\C)/P$ is a homogeneous rational variety.
         
         \begin{Thm}[\cite{BT}] Let $W\subset B\times \CD$ be an algebraic subvariety and $U$ and irreducible component of $W\cap B\times_{\Ga\bsl D}\CD$  such that
         \[
         \codim_{B\times \CD}(U)<\codim_{B\times \CD}(W)+\codim_{B\times \CD}(B\times_{\Ga\bsl D}\CD).\]
         Then the projection of $U$ to $B$ is contained in a proper \mt\ subvariety.\end{Thm}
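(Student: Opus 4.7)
The plan is to follow the Pila--Zannier strategy in the form adapted to variations of Hodge structure. First, I lift the picture to the universal cover $\pi\colon D\to\Ga\bsl D$: pulling back $W\cap(B\times_{\Ga\bsl D}\CD)$ along $(\id,\pi)$, an atypical component $U$ produces, for each $\gamma\in\Ga$, a corresponding irreducible component $\wt{U}_\gamma$ of the analytic intersection $\wt{W}\cap\mathrm{graph}(\gamma\cdot\wt{\Phi})$ in $B\times D$, where $\wt{\Phi}\colon B\to D$ is the lifted period map and $\wt{W}$ is the analytic preimage of $W^{\mathrm{an}}$. The codimension defect in the hypothesis is preserved under this lift, so I am reduced to producing a proper \mtg\ stabilizing a component of $\wt{W}\cap\mathrm{graph}(\wt{\Phi})$.

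Second, I use the theorem of Bakker--Klingler--Tsimerman that $\Phi$ is definable in the o-minimal structure $\R_{\mathrm{an},\exp}$ once the covering $D\to\Ga\bsl D$ is restricted to a Siegel fundamental set $\cF\subset D$. This makes $\wt{W}\cap(B\times\cF)\cap\mathrm{graph}(\wt{\Phi})$ a definable set, while the global atypical locus appears as a countable union of $\Ga$-translates of this definable piece. Standard hyperbolic volume estimates on $D$, together with the codimension defect assumption, force the number of $\gamma\in\Ga$ with $\gamma\cdot\cF$ meeting the lifted atypical component to grow polynomially in the height of $\gamma$.

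Third, I apply the Pila--Wilkie counting theorem to the resulting definable family. The polynomial accumulation of $\Ga$-translates produces a positive-dimensional semialgebraic block on which the transcendental part of $\wt{\Phi}$, controlled by the horizontality condition $\nabla\cF^p\subset\cF^{p-1}\otimes\Om^1_B$, must degenerate. An algebraicity argument in the spirit of Mok--Pila--Tsimerman then upgrades this block to a real algebraic subgroup of $G(\R)$ stabilizing a neighborhood of the lifted component, and Chevalley's theorem together with the rationality of $\Ga$ descends this to a $\Q$-algebraic subgroup $H\subset G$.

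Finally, Deligne's characterization \eqref{new3.4} of \mtg s identifies this $H$ with a proper \mtg, so that the projection of $U$ to $B$ lies in the special subvariety $\Phi^{-1}(\Ga_H\bsl D_H)$, exactly as claimed. The main obstacle is the third step: the polynomial accumulation in the definable family must be established with effective hyperbolic-volume constants on $D$, and the Pila--Wilkie output must be combined with the Andr\'e--Deligne monodromy theorem to exclude spurious $\Ga$-translates that do not arise from genuine special subvarieties. Everything else---in particular the descent from a semialgebraic block stabilizer to an honest \mtg---is essentially formal once the atypical component has been shown to support a positive-dimensional algebraic stabilizer.
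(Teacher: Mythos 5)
The paper does not prove this statement at all: it is quoted verbatim from \cite{BKU} (Theorem III.14 there) as the Ax--Schanuel theorem for variations of Hodge structure, and the proof is attributed entirely to Bakker--Tsimerman \cite{BT}. So there is no in-paper argument to compare yours against; what you have written must be measured against the actual proof in \cite{BT}. Your outline does follow the correct strategy of that proof --- lift to $B\times D$ and work with components of $\wt W\cap(B\times_{\Ga\bsl D}D)$, invoke definability of the period map on a fundamental set \cite{BKT}, count $\Ga$-translates, apply Pila--Wilkie, and convert a positive-dimensional semialgebraic block into an algebraic stabilizer which is then handled by monodromy (Andr\'e--Deligne) considerations. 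As a road map it is accurate.

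However, as a proof it has genuine gaps precisely where the theorem is hard, and you partly concede this yourself. First, the claim that ``standard hyperbolic volume estimates \dots force the number of $\gamma$ \dots to grow polynomially in the height'' is the analytic heart of \cite{BT}: one needs a Hwang--To-type lower bound for the volume of the intersection of an algebraic subvariety of $\CD$ with balls in $D$, compared against an upper bound coming from the algebraicity of $W$, and it is exactly this comparison (not the codimension defect by itself) that produces the polynomial count. Nothing in your sketch supplies it. Second, you omit the induction on dimension (equivalently on the defect) that is needed to pass from ``the block stabilizer is positive-dimensional'' to the stated conclusion; without it the Pila--Wilkie output only gives a stabilizer of the lifted component, not containment of the projection of $U$ in a \emph{proper} \mt\ subvariety. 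Third, the final descent is not via the fixed-tensor characterization \pref{new3.4}: the subgroup produced is a normal subgroup of the algebraic monodromy group of a smaller subvariety, and one concludes via weakly special subvarieties and the theorem of the fixed part, which is a different (and more delicate) argument than the one you gesture at. In short, your proposal correctly identifies the architecture of the known proof but does not constitute one; the quantitative volume/counting step and the inductive descent are missing, and those are the content of \cite{BT}.
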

         
         In the geometric case this says that the analytic locus in $B$ where the periods satisfy a given set of algebraic relations must be of the expected codimension unless there is a reduction in the generic \mtg.
         
         (vi) Finally we will comment on the proof in \cite{BBT} that the image  $P$ of a period mapping
         \[
         \Phi:B\to \Ga\bsl D\]
         is a quasi-projective algebraic variety.
         \beb
         \item In the classical case there is the Satake-Baily-Borel completion $\ol{\Ga\bsl D}_{\rm SBB}$ of $\Ga\bsl D$.  It is a projective algebraic variety, and Borel's extension theorem gives that (after possibly passing to a finite branched cover of $\olb$)   $\Phi$ extends to a morphism
         \[
         \ol\Phi:\ol B\to \ol{\Ga\bsl D}_{\rm SBB}\]
         of algebraic varieties.  This gives the result.
           \item  In the non-classical case  the Hodge variety $\Ga\bsl D$ is not an algebraic variety; on it the only global meromorphic functions are constant.
       \item A this juncture the new concept of a \emph{definable $o$-minimal structure} enters the picture.  A non-compact analytic variety can have such a structure only if it is ``tame at infinity" (cf.\ \cite{BKT}).
       \item For definable $o$-minimal maps of algebraic varieties to analytic varieties having a definable $o$-minimal structure there are algebraicity results; among other things    the image is algebraic.
       \item The by now classical analysis of degenerations of Hodge structures (\cite{S}, \cite{CKS}, \cite{Ka}) is then used to show that the images of period mappings have a definable $o$-minimal structure.
       \item Finally the line bundle $L := \ottimes^p \det F^p$ may be shown to induce an ample line bundle on the image $\Phi(B)\subset \Ga\bsl D$, which is then a quasi-projective algebraic variety.\footnote{It has recently been proved \cite{BFMT} that the canonical extension $L_e \rightarrow \bar{B}$ of $L \rightarrow B$ is semi-ample;  thus Proj($L_e$) defines a canonical projective completion of $\Phi(B)$ giving an extension of the classical Baily-Borel-Satake compactification of the image of period mappings.}
               \eeb
               The use of definable $o$-minimal structures has introduced a whole new set of ideas into Hodge theory and into the arithmetic properties of a variation of Hodge structure and period mappings (\cite{K1}).
    \bibliographystyle{amsalpha}
     \bibliography{AtypicalHodgeLoci}
      \end{spacing}
      \end{document}